\documentclass[review]{elsarticle}


\usepackage[fleqn]{mathtools}
\usepackage{amsmath, amssymb}
\usepackage{tikz}
\usepackage{tcolorbox}
\usepackage[linesnumbered,lined,boxed,commentsnumbered]{algorithm2e}
\usepackage{subfigure}
\usepackage{multirow}
\newtheorem{mylemma}{Lemma}
\newtheorem{proof}{Proof}




\bibliographystyle{model2-names.bst}\biboptions{authoryear}






\makeatletter
\def\ps@pprintTitle{%
   \let\@oddhead\@empty
   \let\@evenhead\@empty
   \def\@oddfoot{\reset@font\hfil\thepage\hfil}
   \let\@evenfoot\@oddfoot
}
\makeatother

\begin{document}

\begin{frontmatter}

\title{Computing non-stationary ($s, S$) policies using mixed integer linear programming}

\author[1]{Mengyuan Xiang}
\ead{mengyuan.xiang@ed.ac.uk}

\author[1]{Roberto Rossi\corref{cor1}}
\ead{roberto.rossi@ed.ac.uk}

\author[1]{Belen Martin-Barragan}
\ead{belen.martin-barragan@ed.ac.uk}

\author[2]{S. Armagan Tarim}
\ead{at@cankaya.edu.tr}

\cortext[cor1]{Corresponding author}

\address[1]{Business School, University of Edinburgh, Edinburgh, United Kingdom}

\address[2]{Department of Management, Cankaya University, Ankara, Turkey}

\begin{abstract}
This paper addresses the single-item single-stocking location stochastic lot sizing problem under the $(s, S) $ policy. We first present a mixed integer non-linear programming (MINLP) formulation for determining near-optimal $(s, S)$ policy parameters. To tackle larger instances, we then combine the previously introduced MINLP model and a binary search approach. These models can be reformulated as mixed integer linear programming (MILP) models which can be easily implemented and solved by using off-the-shelf optimisation software. Computational experiments demonstrate that optimality gaps of these models are around $0.3\%$ of the optimal policy cost and computational times are reasonable. 
\end{abstract}

\begin{keyword}
supply chain management \sep $(s, S)$ policy \sep stochastic lot-sizing \sep mixed integer programming \sep binary search
\end{keyword}

\end{frontmatter}


\section{Introduction}
\noindent
Stochastic lot sizing is an important research area in inventory theory. One of the landmark studies is \cite{Scarf1960} which proved the optimality of $(s, S)$ policies for a class of dynamic inventory models. The $(s, S)$ policy features two control parameters: $s$ and $S$. Under this policy, the decision maker checks the opening inventory level at the beginning of each time period: if it drops to or below the reorder point $s$, then a replenishment should be placed to reach the order-up-to-level $S$. Unfortunately, computing optimal $(s, S)$ policy parameters remains a computationally intensive task.

In the literature, studies on $(s, S)$ policy can be categorized into stationary and non-stationary. A number of attempts have been made to compute stationary $(s, S)$ policy parameters, e.g. \citep{i1963, vw1965, ac1978, s1977, s1982, fz1984, zf1991, fx2000}. However, in reality, there has been an increasing recognition that lot-sizing studies need to be undertaken for non-stationary environments \citep{citeulike:13509695}. Additionally, only two studies investigated computations of $(s, S)$ policy under non-stationary stochastic demand \citep{a1981, bm1999}. This motivates our work on non-stationary $(s, S)$ policy. 

\cite{a1981} adopted the ``least cost per unit time'' approach in selecting order-up-to-levels and reorder points under a penalty cost scheme. Decision makers first determine desired cycle lengths and order-up-to-levels. Then, reorder points are decided by means of a trade-off analysis between expected costs per period in cases of ordering and not ordering. 

As \cite{bm1999} pointed out, \cite{a1981} is computationally expensive because of the convolutions of demand distributions. In contrast, \cite{bm1999} proposed a stationary approximation heuristic for computing optimal $(s, S)$ policy parameters. Firstly, decision makers precompute pairs of $(s, S)$ values for various demand parameters and tabulate results. Here, a large number of efficient algorithms exist for generating the stationary table, e.g. \citep{fz1984, zf1991, fx2000}. Secondly, order-up-to-levels and reorder points can be read from stationary tables by averaging the demand parameters over an estimate of the expected time between two orders. However, this algorithm relies upon complex code, particularly for generating stationary tables. 

Unfortunately, both these works \citep{a1981, bm1999} do not provide a satisfactory solution to the problem: they rely on ad-hoc computer coding and provide relatively large optimality gaps. A recent computational study \cite{dktr2016} estimated the optimality gap of \citep{a1981, bm1999} at $3.9\%$ and $4.9\%$, respectively. These drawbacks motivate our work in finding a heuristic method for computing $(s, S)$ policy parameters which does not need computer coding and can provide better optimality gaps. 

In this paper, we therefore introduce a new modelling framework to compute near-optimal $(s, S)$ policy parameters. In particular, we consider a single-item single-stocking location stochastic lot-sizing problem under non-stationary demand, fixed and unit ordering cost, holding cost and penalty cost. In contrast to other approaches in the literature, our models can be easily implemented and solved by using off-the-shelf software such as IBM ILOG optimisation studio. We make the following contributions to literature on stochastic lot-sizing.
\begin{itemize}
\item We introduce the first mixed integer non-linear programming (MINLP) model to compute near-optimal $(s, S)$ policy parameters.
\item  We show that this model can be reformulated as a mixed integer linear programming (MILP) model by piecewise linearising the cost function; this reformulation can be solved by using off-the-shelf software.
\item To tackle larger instances, we combine the previously introduced MINLP model and a binary search procedure.
\item Computational experiments demonstrate that optimality gaps of our models are tighter than existing algorithms \citep{a1981, bm1999} in the literature, and computational times of our models are reasonable.
\end{itemize}

The rest of this paper is organised as follows. Section \ref{problemdescription} describes  problem settings and a stochastic dynamic programming (SDP) formulation. Section \ref{approximation} discusses the notion of $K$-convexity and introduces relevant $K$-convex cost functions which are approximated by an MINLP model in Section \ref{minlpmodel}. Section \ref{milpmodel} presents an MINLP heuristic for approximating $(s, S)$ policy parameters. Section \ref{binarysearchmodel} introduces an alternative binary search approach for computing $(s, S)$ policy parameters. A detailed computational study is given in Section \ref{computationalstudy}. Finally, we draw conclusions in Section \ref{conclusion}.

\section{Problem description}\label{problemdescription}
\noindent
We consider a single-item single-stocking location inventory management system over a $T$-period planning horizon.  We assume that orders are placed at the beginning of each time period, and delivered instantaneously. There exist ordering costs $c(\cdot)$ comprising a fixed ordering cost $K$ for placing an order, and a linear ordering cost $c$ proportional to order quantity $Q$. Demands $d_t$ in each period $t=1, \ldots, T$ are independent random variables with known probability distributions. At the end of period $t$, a linear holding cost $h$ is charged on every unit carried from one period to the next; and a linear penalty cost $b$ is occurred for each unmet demand at the end of each time period.

For a given period $t=\{1, \ldots, T\}$, let $I_{t-1}$ denote the opening inventory level and $Q_t$ represent the order quantity. Then the immediate cost of period $t$ can be expressed as
\begin{equation}\label{e1}
f_t(I_{t-1},Q_t)=c(Q_t)+\text{E}[h \max(I_{t-1}+Q_t-d_t,0)+b \max(d_t-I_{t-1}-Q_t,0)], 
\end{equation}
where E denotes the expectation taken with respect to the random demand $d_t$. Additionally, the ordering cost  $c(Q_t)$ is defined as:
\[c(Q_t)=\begin{cases}
K+c~Q_t, &\texttt{ $Q_t>0$} \\
0, & \texttt{ $Q_t = 0$}
\end{cases}\]

Let $C_t(I_{t-1})$ represent the expected total cost of an optimal policy over periods $t, \ldots, T$ when the initial inventory level at the beginning of period $t$ is $I_{t-1}$. We model the problem as a stochastic dynamic program \citep{Bellman:1957} via the following functional equation
\begin{equation}\label{minsdp}
C_t(I_{t-1})=\min_{Q_t} \left\{ f_t(I_{t-1},Q_t)+\text{E}[C_{t+1}(I_{t-1}+Q_t-d_t)]\right\}
\end{equation}
where 
\[C_T(I_{T-1})=\min_{Q_t}  f_T(I_{T-1},Q_T)\]
represents the boundary condition.

\section{The optimality of $(s, S)$ policies in stochastic lot sizing}\label{approximation}
\noindent
\cite{Scarf1960} proved that the optimal policy in the dynamic inventory problem is always of the $(s, S)$ type based on a study of the function
\begin{equation}
\label{sgfunction}
G_t(y)=cy+\text{E}[h \max(y-d_t,0)+b \max(d_t-y,0)]+\text{E}[C_{t+1}(y-d_t)],
\end{equation}
where $y$ is the stock level immediately after purchases are delivered.

Since we consider a non-stationary environment, values of the $(s, S)$ policy parameters will depend on the given period $t$. Let $(s_t, S_t)$ denote the policy parameters for period $t$. Function $G_t(y)$ can be used to define the policy parameters $(s_t, S_t)$ and prove their optimality. In particular, the order-up-to-level $S_t$ is defined as the value minimising $G_t(y)$; whereas the parameters $s_t$ is given by the value $s_t<S_t$ such that $K+G_t(S_t)=G_t(s_t)$. $K$-convexity of the function $G_t(y)$ ensures the uniqueness of $s_t$ and $S_t$ \citep{Scarf1960}.

{\bf Example. }
We illustrate the concepts introduced on a 4-period example. Demand $d_t$ is normally distributed in each period $t$ with mean $\mu_t \in\{20, 40, 60, 40\}$, for $t= 1,\ldots, 4$ respectively. Standard deviation $\sigma_t$ of demand in period $t$ is equal to $0.25\mu_t$. Other parameters are $K=100$, $h=1$, $b=10$, and $c=0$. We plot $G_1(y)$ in Fig. \ref{Gfunction} for initial inventory levels $y \in (0, 200)$. The expected total costs $G_1(y)$ are obtained via SDP. The order-up-to-level is $S_1=70$ and the minimised expected total cost $G_1(S_1)=262.5839$; the reorder point is $s_1=14$ and the corresponding cost $G_1(s_1)=362.5839$. Note that $G_1(s_1)=G_1(S_1)+K$. The optimal policy is to order to $70$ if the initial inventory $y<14$; otherwise not to order.

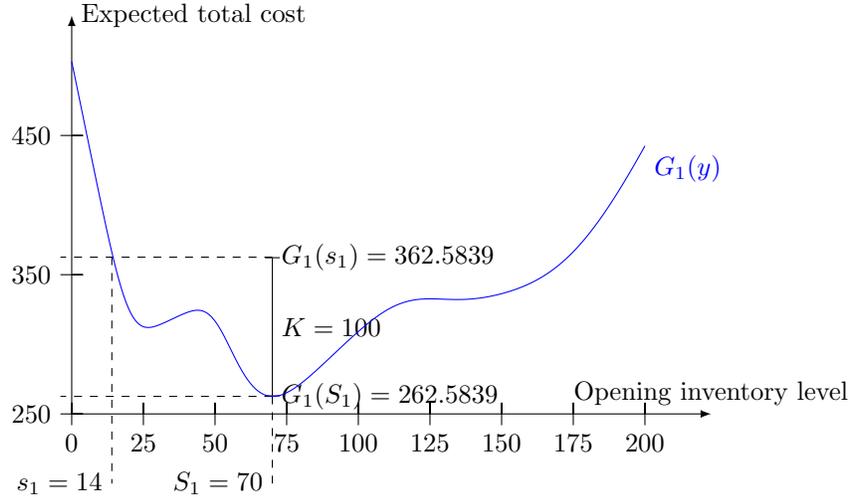
\begin{figure}[!htp]
\centering
\begin{tikzpicture}[x=0.0380952380952381cm, y=0.018518518518518517cm]
\draw [-latex] ([xshift=-0mm] 0.0,0) -- ([xshift=5mm] 210.0,0) node[above] {Opening inventory level};
\draw (0.0,0) -- +(0mm,1.5mm) -- +(0mm,-1.5mm) node[below] {0};
\draw (25.0,0) -- +(0mm,1.5mm) -- +(0mm,-1.5mm) node[below] {25};
\draw (50.0,0) -- +(0mm,1.5mm) -- +(0mm,-1.5mm) node[below] {50};
\draw (75.0,0) -- +(0mm,1.5mm) -- +(0mm,-1.5mm) node[below] {75};
\draw (100.0,0) -- +(0mm,1.5mm) -- +(0mm,-1.5mm) node[below] {100};
\draw (125.0,0) -- +(0mm,1.5mm) -- +(0mm,-1.5mm) node[below] {125};
\draw (150.0,0) -- +(0mm,1.5mm) -- +(0mm,-1.5mm) node[below] {150};
\draw (175.0,0) -- +(0mm,1.5mm) -- +(0mm,-1.5mm) node[below] {175};
\draw (200.0,0) -- +(0mm,1.5mm) -- +(0mm,-1.5mm) node[below] {200};
\draw [-latex] ([yshift=-0mm] 0,0.0) -- ([yshift=3mm] 0, 270.0) node[right] {Expected total cost};
\draw (0,0.0) -- +(1.5mm,0mm) -- +(-1.5mm,0mm) node[left] {250};
\draw (0,100.0) -- +(1.5mm,0mm) -- +(-1.5mm,0mm) node[left] {350};
\draw (0,200.0) -- +(1.5mm,0mm) -- +(-1.5mm,0mm) node[left] {450};

\draw [color=black, mark= , style=dashed] (70,12.5839) -- (70,-50) node[left] {$S_1=70$};
\draw [color=black, mark= , style=dashed] (14,112.5839) -- (14,-50) node[left] {$s_1=14$};

\draw [color=black, mark= , style=solid, |-|] (70,112.5839) node[right] {$G_1(s_1)=362.5839$} -- (70,12.5839) node[right] {$G_1(S_1)=262.5839$} node[pos=0.5,right]{$K=100$};
\draw [color=black, mark= , style=dashed] (70,12.5839) -- (-1.5mm,12.5839);
\draw [color=black, mark= , style=dashed] (70,112.5839) -- (-1.5mm,112.5839);
\draw [smooth, color=blue, mark= , style=solid] plot coordinates {
(0, 253.0985133)
(1, 243.0990424)
(2, 233.1002282)
(3,	223.1027871)
(4,	213.1081048)
(5,	203.1187484)
(6,	193.1392723)
(7,	183.1774091)
(8,	173.2457154)
(9,	163.3636806)
(10, 153.5601892)
(11	, 143.8760714)
(12	, 134.3662915)
(13, 125.1011707)
(14, 116.165976)
(15, 107.6583027)
(16, 99.68296406)
(17, 92.34456423)
(18, 85.73847716)
(19, 79.94145151)
(20, 75.0033453)
(21, 70.94145151)
(22, 67.73847716)
(23, 65.34456423)
(24, 63.68296406)
(25, 62.65830268)
(26, 62.16597601)
(27, 62.10117068)
(28, 62.36629146)
(29, 62.87607137)
(30, 63.5600046)
(31, 64.36287635)
(32, 65.24357091)
(33, 66.17246024)
(34, 67.12865938)
(35, 68.09710261)
(36, 69.06574092)
(37, 70.02291631)
(38, 70.95488918)
(39, 71.84349815)
(40, 72.66401071)
(41, 73.38335025)
(42, 73.95901997)
(43, 74.33913432)
(44, 74.46395957)
(45, 74.26922138)
(46, 73.69115335)
(47,	 72.6728824)
(48, 71.17136191)
(49, 69.16378776)
(50, 66.65236669)
(51, 63.66650404)
(52, 60.26191182)
(53, 56.51670934)
(54, 52.52514768)
(55, 48.3899929)
(56, 44.21475584)
(57, 40.09684214)
(58, 36.12237561)
(59, 32.3630303)
(60, 28.87480724)
(61, 25.69840004)
(62, 22.86064736)
(63, 20.37656259)
(64, 18.25151972)
(65, 16.48330934)
(66, 15.06391322)
(67, 13.98095271)
(68, 13.21883454)
(69, 12.75964926)
(70, 12.58388394)
(71, 12.67100198)
(72, 12.99992808)
(73, 13.54946284)
(74, 14.29863993)
(75, 15.22703176)
(76, 16.31500582)
(77, 17.54393194)
(78, 18.89634126)
(79, 20.35603795)
(80, 21.90816635)
(81, 23.53923672)
(82, 25.23711423)
(83, 26.99097589)
(84, 28.79124099)
(85, 30.62948017)
(86, 32.49830846)
(87,	 34.39126704)
(88, 36.30269816)
(89, 38.22761693)
(90, 40.16158328)
(91, 42.10057692)
(92, 44.04087734)
(93, 45.978951)
(94, 47.91134712)
(95, 49.83460367)
(96, 51.74516464)
(97,	 53.63931013)
(98, 55.51310022)
(99, 57.36233405)
(100, 59.18252515)
(101, 60.96889425)
(102, 62.71638021)
(103, 64.41966993)
(104, 66.0732472)
(105, 67.67146035)
(106, 69.20860754)
(107, 70.67903817)
(108, 72.07726798)
(109, 73.39810459)
(110, 74.63677967)
(111, 75.78908326)
(112, 76.85149528)
(113, 77.82130896)
(114, 78.69674105)
(115, 79.47702368)
(116, 80.1624734)
(117, 80.75453351)
(118, 81.25578704)
(119, 81.66993843)
(120, 82.00176378)
(121, 82.2570305)
(122, 82.4423887)
(123, 82.56523782)
(124, 82.63357314)
(125, 82.65581765)
(126, 82.64064531)
(127, 82.59680203)
(128, 82.53293079)
(129, 82.45740681)
(130, 82.37818816)
(131, 82.30268652)
(132, 82.23766152)
(133, 82.1891412)
(134, 82.16236982)
(135, 82.16178319)
(136, 82.19101055)
(137, 82.252901)
(138, 82.34957197)
(139, 82.48247615)
(140, 82.65248329)
(141, 82.85997283)
(142, 83.10493343)
(143, 83.3870654)
(144, 83.70588262)
(145, 84.06081049)
(146, 84.45127743)
(147, 84.87679748)
(148, 85.33704239)
(149, 85.83190199)
(150, 86.36153209)
(151, 86.9263898)
(152, 87.5272562)
(153, 88.16524715)
(154, 88.84181265)
(155, 89.55872614)
(156, 90.31806447)
(157, 91.12218013)
(158, 91.97366682)
(159, 92.87531981)
(160, 93.83009226)
(161, 94.84104896)
(162, 95.91131844)
(163, 97.04404477)
(164, 98.24233994)
(165, 99.50923771)
(166, 100.8476499)
(167, 102.2603255)
(168, 103.7498136)
(169, 105.31843)
(170, 106.9682288)
(171, 108.7009778)
(172, 110.5181393)
(173, 112.4208557)
(174, 114.4099392)
(175, 116.4858674)
(176, 118.648782)
(177, 120.8984927)
(178, 123.2344849)
(179, 125.6559305)
(180, 128.1617022)
(181, 130.750391)
(182, 133.4203254)
(183, 136.1695932)
(184, 138.996064)
(185, 141.8974141)
(186, 144.8711506)
(187, 147.9146371)
(188, 151.025119)
(189, 154.1997476)
(190, 157.4356051)
(191, 160.7297275)
(192, 164.0791265)
(193, 167.4808109)
(194, 170.9318052)
(195, 174.429168)
(196, 177.9700075)
(197, 181.5514963)
(198, 185.1708833)
(199, 188.8255054)
(200, 192.5127956)}
node[below right] {$G_1(y)$};
\end{tikzpicture}
\caption{Plot of $G_1(y)$}
\label{Gfunction}
\end{figure}

\section{MINLP approximation of Scarf's $G_t(y)$ function} \label{minlpmodel}
\noindent
In this section, we exploit an MINLP model to approximate the function $G_t(y)$ in Eq. (\ref{sgfunction}). Our model follows the control policy known as ``static-dynamic uncertainty'' strategy, originally introduced in \cite{bt1988}. Under this strategy, the timing of orders and order-up-to-levels are expected to be determined at the beginning of the planning horizon, while associated order quantities are decided upon only when orders are issued. As illustrated in \cite{rkt2015}, this strategy provides a cost performance which is close to the optimal ``dynamic uncertainty'' strategy. However, optimal $(s, S)$ parameters cannot be immediately derived from existing mathematical programming models operating under a static-dynamic uncertainty strategy, such as \cite{citeulike:12317242}, and \cite{rkt2015}. We next illustrate how a model operating under a static-dynamic uncertainty strategy can be used to approximate the function $G_t(y)$ in Eq. (\ref{sgfunction}). 

Consider a random variable $\omega$ and a scalar variable $x$. The first order loss function is defined as $L(x,\omega)=\text{E}[\max(\omega-x,0)]$, where E denotes the expected value with respect to the random variable $\omega$. The complementary first order loss function is defined as $\hat{L}(x,\omega)=\text{E}[\max(x-\omega,0)]$. Like \cite{rkt2015}, we will model non-linear holding and penalty costs by means of this function.

Consider three sets of decision variables: $\tilde{I}_t$, the expected closing inventory level at the end of period $t$, with $I_0$ denoting the initial inventory level; $\delta_t$, a binary variable which is set to one if an order is placed in period $t$; $P_{jt}$, a binary variable which is set to one if and only if the most recent replenishment before period $t$ was issued in period $j$. Let $\tilde{d}_{jt}$ denote the expected value of the demand over periods $j,\ldots,t$, i.e. $\tilde{d}_{jt}=\tilde{d}_j+\cdots+\tilde{d}_t$. Decision variables $H_{t} \geq 0$ and $B_t \geq 0$ for $t=1, \ldots, T$ represent end of period $t$ expected excess inventory and back-orders, respectively. An MINLP formulation for the non-stationary stochastic lot-sizing problem, obtained following the modeling strategy in  \cite{rkt2015}, is shown in Figure \ref{MILPmodel}.

\begin{figure}[!htbp]
\begin{tcolorbox}
\begin{equation}
\label{sS1-1}
\min\big(-cI_0+c\sum_{t=1}^T\tilde{d}_t+\sum_{t=1}^T(K \delta_t + hH_t + bB_t)+c\tilde{I}_T\big)
\end{equation}
Subject to, $t = 1, 2, \ldots, T$
\begin{align}
&\delta_t=0 \rightarrow \tilde{I}_{t}+ \tilde{d}_{t} - \tilde{I}_{t-1} = 0					\label{sS1-7}     \\
&\sum_{j=1}^t P_{jt}= 1													\label{sS1-9}	\\
&P_{jt} \geq \delta_{j} - \sum_{k=j+1}^t \delta_{k}, 				&j = 1, 2, \ldots, t	\label{sS1-10}	\\
&P_{jt} = 1 \rightarrow H_{t}= \hat{L}(\tilde{I}_t+\tilde{d}_{jt},d_{jt}),	&j = 1, 2, \ldots, t	\label{sS1-3}	\\
&P_{jt} = 1 \rightarrow B_{t}= L(\tilde{I}_t+\tilde{d}_{jt},d_{jt}),   		&j = 1, 2, \ldots, t	\label{sS1-4}	\\
&P_{jt} \in \{0, 1\},       									&j = 1, 2, \ldots, t   	\label{sS1-11}  	\\
&\delta_{t} \in \{0, 1\}														\label{sS1-12}	
\end{align}
\end{tcolorbox}
\caption{The formulation of the non-stationary stochastic lot-sizing problem}
\label{MILPmodel}
\end{figure}

The objective function (\ref{sS1-1}) computes the minimised expected total cost comprising ordering cost, holding cost and penalty cost. Constraints (\ref{sS1-7}) state inventory balance equations. Constraints (\ref{sS1-9}) indicate the most recent replenishment before period $t$ was issued in period $j$. Constraints (\ref{sS1-10}) identify uniquely the period in which the most recent replenishment prior to $t$ took place. Constraints (\ref{sS1-3}) and (\ref{sS1-4}) model end of period $t$ expected excess inventory and back-orders by means of the first order loss function.

We now discuss how to adapt the model in Fig. \ref{MILPmodel} in order to approximate $G_t(y)$. We call this modified model MINLP-$s$, and use superscript ``$s$" to label decision variables in this model. For any given initial inventory level $I_0^s$, let $G^s_1(I_0^s)$ denote the expected total cost over periods $1, \ldots, T$ without issuing an order in period $1$,
\begin{equation}\label{obj5-2}
G^s_1(I^s_0) = -cI_0^s+c\sum_{t=1}^T\tilde{d}_t+\sum_{t=1}^T(K \delta_t^s + hH_t^s + bB_t^s)+c\tilde{I}_T^s.
\end{equation}
MINLP-$s$ optimises $G^s_1(I^s_0)$ subject to constraints in Fig. \ref{MILPmodel} with an additional constraint
\begin{equation}\label{re4}
\delta_1^s=0,
\end{equation} 
which forces the model not to place a replenishment in period 1. Note that MINLP-$s$ can easily be approximated as an MILP model by using the approach discussed in \cite{rkt2015} to piecewise linearise loss functions in constraints (\ref{sS1-3}) and (\ref{sS1-4}). For further details please refer to \ref{oplsyntax}.

{\bf Example. }
In Fig. \ref{Gsfunction}, we plot the expected total cost $G_1^s(y)$ for the same $4$-period numerical example in Fig. \ref{Gfunction} with initial inventory level $I_0^s \in (0, 200)$, $G_1^s(y)$ are obtained via the MILP-$s$. Since $G_1^s(y)$ approximates $G_1(y)$, we can use $G_1^s(y)$ to find approximate values $\hat{S}_1$ and $\hat{s}_1$  for $S_1$ and $s_1$.

\begin{figure}[!htbp]
\centering
\begin{tikzpicture}[x=0.0380952380952381cm, y=0.018518518518518517cm]
\draw [-latex] ([xshift=-0mm] 0.0,0) -- ([xshift=5mm] 210.0,0) node[above] {Opening inventory level};
\draw (0.0,0) -- +(0mm,1.5mm) -- +(0mm,-1.5mm) node[below] {0};
\draw (25.0,0) -- +(0mm,1.5mm) -- +(0mm,-1.5mm) node[below] {25};
\draw (50.0,0) -- +(0mm,1.5mm) -- +(0mm,-1.5mm) node[below] {50};
\draw (75.0,0) -- +(0mm,1.5mm) -- +(0mm,-1.5mm) node[below] {75};
\draw (100.0,0) -- +(0mm,1.5mm) -- +(0mm,-1.5mm) node[below] {100};
\draw (125.0,0) -- +(0mm,1.5mm) -- +(0mm,-1.5mm) node[below] {125};
\draw (150.0,0) -- +(0mm,1.5mm) -- +(0mm,-1.5mm) node[below] {150};
\draw (175.0,0) -- +(0mm,1.5mm) -- +(0mm,-1.5mm) node[below] {175};
\draw (200.0,0) -- +(0mm,1.5mm) -- +(0mm,-1.5mm) node[below] {200};
\draw [-latex] ([yshift=-0mm] 0,0.0) -- ([yshift=3mm] 0, 270.0) node[right] {Expected total cost};
\draw (0,0.0) -- +(1.5mm,0mm) -- +(-1.5mm,0mm) node[left] {250};
\draw (0,100.0) -- +(1.5mm,0mm) -- +(-1.5mm,0mm) node[left] {350};
\draw (0,200.0) -- +(1.5mm,0mm) -- +(-1.5mm,0mm) node[left] {450};

\draw [color=black, mark= , style=dashed] (70,16.298) -- (70,-50) node[left] {$\hat{S}_1=70$};
\draw [color=black, mark= , style=dashed] (15,116.298) -- (15,-50) node[left] {$\hat{s}_1=15$};

\draw [color=black, mark= , style=solid, |-|] (70,116.298) node[right] {$G_1^s(\hat{s}_1)=366.298$} -- (70,16.298) node[right] {$G_1^s(\hat{S}_1)=266.298$} node[pos=0.5,right]{$K=100$};

\draw [color=black, mark= , style=dashed] (70,16.298)  -- (-1.5mm,16.298);
\draw [color=black, mark= , style=dashed] (70,116.298) -- (-1.5mm,116.298);

\draw [smooth, color=red, mark= , style=solid] plot coordinates {
(0,	261.692)
(1,	251.692)
(2,	241.692)
(3,	231.692)
(4,	221.692)
(5,	211.692)
(6,	201.692)
(7,	191.692)
(8,	181.692)
(9,	171.692)
(10, 162.002)
(11, 152.465)
(12, 142.928)
(13, 133.39)
(14, 124.762)
(15, 116.145)
(16, 108.248)
(17,	 100.849)
(18, 94.338)
(19, 88.344)
(20, 83.634)
(21, 79.344)
(22, 76.338)
(23, 73.849)
(24, 72.248)
(25, 71.145)
(26, 70.762)
(27,	 70.39)
(28, 70.928)
(29, 71.465)
(30, 72.002)
(31, 72.692)
(32, 73.692)
(33, 74.692)
(34, 75.692)
(35, 76.692)
(36, 77.692)
(37, 78.692)
(38, 79.692)
(39, 80.692)
(40, 81.692)
(41, 82.692)
(42, 83.692)
(43, 84.692)
(44, 85.692)
(45, 86.692)
(46, 87.692)
(47, 88.692)
(48, 89.692)
(49, 90.692)
(50, 86.298)
(51, 79.898)
(52, 73.499)
(53, 67.1)
(54, 60.701)
(55, 55.548)
(56, 50.553)
(57, 45.559)
(58, 40.564)
(59, 36.949)
(60, 33.449)
(61, 29.949)
(62, 26.564)
(63, 24.559)
(64, 22.553)
(65, 20.548)
(66, 18.701)
(67,	 18.1)
(68, 17.499)
(69, 16.898)
(70, 16.298)
(71, 16.591)
(72, 17.208)
(73, 17.826)
(74, 18.443)
(75, 19.06)
(76, 20.021)
(77,	 21.559)
(78, 23.096)
(79, 24.633)
(80, 26.171)
(81, 27.708)
(82, 29.245)
(83, 30.783)
(84, 32.385)
(85, 34.385)
(86, 36.385)
(87,	 38.385)
(88, 40.385)
(89, 42.385)
(90, 44.385)
(91, 46.385)
(92, 48.385)
(93, 50.385)
(94, 52.385)
(95, 54.385)
(96, 56.385)
(97, 58.385)
(98, 60.385)
(99, 62.385)
(100, 64.385)
(101, 66.385)
(102, 68.385)
(103, 70.385)
(104, 72.385)
(105, 74.385)
(106, 76.385)
(107, 78.385)
(108, 80.385)
(109, 82.385)
(110, 84.385)
(111, 86.385)
(112, 88.385)
(113, 90.385)
(114, 92.385)
(115, 94.385)
(116, 96.385)
(117, 98.385)
(118, 100.385)
(119, 102.385)
(120, 104.385)
(121, 106.385)
(122, 107.389)
(123, 104.889)
(124, 103.558)
(125, 102.553)
(126, 101.547)
(127, 100.542)
(128, 99.537)
(129, 98.531)
(130, 97.735)
(131, 98.134)
(132, 98.533)
(133, 98.932)
(134, 99.331)
(135, 99.731)
(136, 100.13)
(137, 100.529)
(138, 101.929)
(139, 103.547)
(140, 105.164)
(141, 106.781)
(142, 108.399)
(143, 110.016)
(144, 111.633)
(145, 113.251)
(146, 114.868)
(147, 117.269)
(148, 119.806)
(149, 122.344)
(150, 124.881)
(151, 127.418)
(152, 129.956)
(153, 132.493)
(154, 135.03)
(155, 137.568)
(156, 135.064)
(157, 132.576)
(158, 130.613)
(159, 128.651)
(160, 126.724)
(161, 125.224)
(162, 123.724)
(163, 122.224)
(164, 121.249)
(165, 121.244)
(166, 121.238)
(167, 121.233)
(168, 121.228)
(169, 121.222)
(170, 121.217)
(171, 121.212)
(172, 122.372)
(173, 123.771)
(174, 125.17)
(175, 126.569)
(176, 127.968)
(177, 129.367)
(178, 130.767)
(179, 132.166)
(180, 134.201)
(181, 136.818)
(182, 139.435)
(183, 142.053)
(184, 144.67)
(185, 147.287)
(186, 149.905)
(187, 152.522)
(188, 155.139)
(189, 157.757)
(190, 160.697)
(191, 164.234)
(192, 167.771)
(193, 171.309)
(194, 174.846)
(195, 178.383)
(196, 181.921)
(197, 185.458)
(198, 188.995)
(199, 192.533)
(200, 196.07)}
node[above right] {$G_1^s(y)$};
\end{tikzpicture}
\caption{Plot of $G_1^s(y)$}
\label{Gsfunction}
\end{figure}
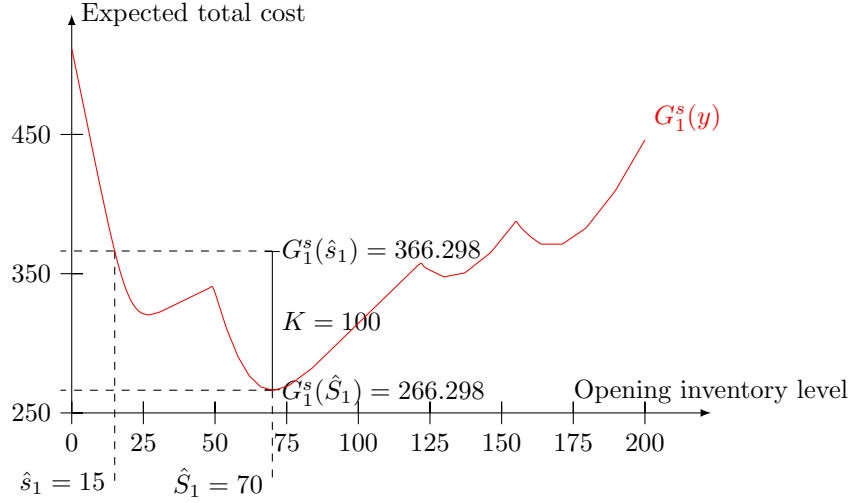

\section{An MINLP-based model to approximate $(s, S)$ policy parameters}\label{milpmodel}
\noindent
In this section we present an MINLP heuristic for computing near-optimal $(s,S)$ policy parameters. To the best of our knowledge, this is the first MINLP model for computing near-optimal $(s, S)$ policy parameters.

In a similar fashion to ``MINLP-$s$'', we introduce ``MINLP-$S$''. MINLP-$S$ imposes the constraint 
\begin{equation}\label{delta11}
\delta_1^S=1, 
\end{equation}
which forces the model to place a replenishment in period 1. Similarly to Eq. \ref{obj5-2}, let the objective function of MINLP-$S$ be $C_1^S(\cdot)$,
which approximates $C_1(\cdot)$. 

Recall that $I_0^S$ represents the initial inventory level in MINLP-$S$. Since in MINLP-$S$ a replenishment is forced in period 1 (Eq. \ref{delta11}), this variable --- which is left free to vary in the model --- represents an approximation $\hat{S}_1$ of the order-up-to-level $S_1$ in period $1$. We observe that $C_1^S(\hat{S}_1) = G_1^s(\hat{S}_1)+K$, since the only difference between MINLP-$S$ and MINLP-$s$ is the constraint that prescribes whether to force or not a replenishment in period $1$.

Since $G^s_1(y)$ is an approximation of $G_1(y)$,  if we identify an opening inventory level $I_0^s<\hat{S}_1$ such that $G_1^s(I_0^s)=G_1^s(I_0^S)+K$, then $s_1\approx I_0^s$. Therefore, we can approximate $s_1$ and $S_1$ simultaneously by connecting MINLP-$S$ and MINLP-$s$ via the constraint
\begin{equation}\label{re3}
G^s_1(I_0^s)=C_1^S(I_0^S).
\end{equation}
Finally, since $s_1\leq S_1$, we introduce an additional constraint to ensure that the reorder point is not greater than the order-up-to-level,
\begin{equation}\label{re7}
I_0^s \leq I_0^S.
\end{equation}

Note that, in contrast to the true value $G_1(y)$, there is no guarantee that $K$-convexity holds for its approximation $G^s_1(y)$. For some instances we may therefore have multiple values $s_1$ such that (\ref{re3}) holds. As we will demonstrate in our computational study, leaving to the solver the freedom to choose one of such values in a non-deterministic fashion leads to competitive results. 

MINLP-$S$ and MINLP-$s$ are connected by Eq. (\ref{re3}), in such a way the order-up-to-level $S_1$, the reorder point $s_1$, and the optimal expected total cost are approximated simultaneously. For the joint MINLP model, decision variables are those in both MINLP-$S$ and MINLP-$s$ with addition of initial inventory levels $I_0^S$ and $I_0^s$. The holistic objective function is to minimise the expected total cost of MINLP-$S$ over the planning horizon and the expected total cost of MINLP-$s$ from period two to the end of the planing horizon,
\begin{equation}
\begin{aligned}
\min\Big(&-cI_{0}^S + c\sum_{t=1}^T\tilde{d}_{t}^S+\sum_{t=1}^T(K \delta_t^S + hH_t^S + bB_t^S) +c\tilde{I}_T^S\\&-cI_{0}^s + c\sum_{t=1}^T\tilde{d}_{t}^s+\sum_{t=2}^T(K \delta_t^s + hH_t^s + bB_t^s) +c\tilde{I}_T^s \Big);
\end{aligned}
\end{equation}
note that the missing period for MINLP-$s$ is taken care of by constraints \ref{re4} and \ref{re3}.

Constraints of the joint MINLP model are those of both MINLP-$S$ and MINLP-$s$ in addition to the linking constraints (\ref{re4}), (\ref{delta11}), (\ref{re3}) and (\ref{re7}). By solving the joint MINLP model over the planning horizon $k, \ldots, T$, one estimates $S_k$ and $s_k$, where $k=1, \ldots, T$. As previously discussed, the joint MINLP model can also be linearised via the piecewise-linear approximation proposed in \cite{rkt2015}. In our MILP model, (\ref{sS1-3}) and (\ref{sS1-4}) are modelled via the piecewise OPL expression \citep{oplmanual}. For a complete overview of the MILP model refer to \ref{piecewisemodel}. 


{\bf Example. }
We now use the same $4$-period numerical example in Fig. \ref{Gsfunction} to demonstrate the modelling strategy behind the joint MINLP heuristic. We observe that, for period $1$, the approximated order-up-to-level is $S_1=70.2658$, the reorder point is $s_1=15.0008$, the optimal expected total cost $G_1^s(s_1)= 366.138$ as shown in Fig. \ref{Gfunction}. By solving the joint MNILP repeatedly, $S_t$, $s_t$ and $G^s_t(s_t)$, for $t=1, \ldots, 4$, are estimated as shown in Table \ref{minlpdetailparameter}.
\begin{table}[!htbp]
\centering
\begin{tabular}{|l|cccc|}
\hline
t &1&2&3&4\\
\hline
$s_t$ & 15.0008 & 29.0161 & 58.1089 & 29.0161\\
$S_t$ & 70.2658 & 53.9768 & 116.5530 & 53.9768\\
$G^s_t(s_t)$ & 366.138 &311.369 &193.338 & 118.031\\
\hline
\end{tabular}
\caption{Near-optimal $(s, S)$ policy parameters obtained via the joint MINLP heuristic}
\label{minlpdetailparameter}
\end{table}

\section{A binary search approach to approximate $(s, S)$ policy parameters}\label{binarysearchmodel}
\noindent
The joint MINLP heuristic presented in the last section can only effectively tackle small-size instances. In order to tackle larger-size problems, we introduce a more efficient approach that combines the model MINLP-$s$ discussed in Section \ref{milpmodel} and a binary search strategy. More precisely, we first let $I_0^s$ to be a decision variable in MINLP-$s$ and minimise $G_k^s(I_0^s)$ to estimate the order-up-to-level $\hat{S}_1$ and the minimised expected total cost $G_1^s(\hat{S}_1)$ for period $1$. Next, since the $K$-convexity holds for $G_1(y)$, there exits a unique reorder point $s_1$ such that $G_1(s_1)=G_1(S_1)+K$. Since $G_1^s(I^s_0)$ is an approximation of $G_1(y)$, we can conduct a binary search to approximate the reorder point $\hat{s}_1$ by $I_0^s \leq \hat{S}_1$ at which $G_1^s(I^s_0)=G_1^s(\hat{S}_1)+K$. By repeating this procedure over the planning horizon $k, \ldots, T$, we find pairs of $S_k$ and $s_k$, where $k =1, \ldots, T$. 

Algorithm \ref{algorithm} shows the binary search approach. For any given planning horizon $k, \ldots, T$, where $k=1,\ldots, T$, we first let $I_{k-1}^s$ to be a decision variable in MINLP-$s$ and minimise $G_k^s(I_{k-1}^s)$ so that to estimate the order-up-to-level $\hat{S}_k$ and the minimised expected total cost $G_k^s(\hat{S}_k)$ for period $k$. We assume, for the binary search method, the initial low value ($low$) is a large negative integer and the initial high value ($high$) is equal to $\hat{S}_k$ (line $4$ in Algorithm \ref{algorithm}). Then, we start the binary search procedure (line $5$) while $low < high$. We calculate the average value $mid=low+\text{round}((high-low)/2)$ (Line $6$). Next step is to run the MINLP-$s$ by updating the initial inventory level $I_{k-1}^s$ with the calculated middle value $I_{k-1}^s=mid$ and to obtain current expected total cost $G_k^s(I_{k-1}^s)$ (line $8$). If current cost $G_k^s(I_{k-1}^s)-G^s_k(\hat{S}_k)-K<0$, then we update $high=low-stepsize$ (line $10$); if current cost $G_k^s(I_{k-1}^s)-G_k^s(\hat{S}_k)-K>0$, then we update $low=mid+stepsize$ (line $12$); otherwise, $\hat{s}_k=mid$ (line $14$). By repeating this procedure over planning horizon $k, \ldots, T$, we obtain $\hat{s}_k$, $\hat{S}_k$, and the optimal cost, where $k=1, \ldots, T$. 
\begin{algorithm}[!htb]
\SetAlgoLined
\KwData{costs (ordering cost, holding cost, penalty cost), mean demand and standard deviation of each period, stepsize}
\KwResult{pairs of $s$ and $S$ for each period}
\BlankLine
\For{$k=1$ \KwTo $T$}{
Minimising MINLP-$s$ in Section \ref{milpmodel} in OPL\;
Obtaining $G_k^s(\hat{S}_k)$ and $\hat{S}_k$\;
\BlankLine
$low = \text{a large negative integer}$; $high=\hat{S}_k$\;
\BlankLine
\While{$low<high$}{
$mid=low+\text{round}((high-low)/2)$\;
Running the MINLP-$s$ with $I_{k-1}^s=  $ in OPL\;
Obtaining currentcost $G_k^s(I_{k-1}^s)$\;
\BlankLine
\If{$G_k^s(I_{k-1}^s)-G_k^s(\hat{S}_k)-K<0.0001$}{
$high=mid-\text{stepsize}$\;
\ElseIf{$G_k^s(I_{k-1}^s)-G_k^s(\hat{S}_k)-K>0.0001$}{
$low=mid+\text{stepsize}$\;
\Else{$\hat{S}_k=mid$\;
$low=high$\;
}
}
}
}
}
\caption{The binary search algorithm}
\label{algorithm}
\end{algorithm}

{\bf Example. }
We illustrate the solution method just discussed via the same $4$-period numerical example presented in Fig. \ref{Gfunction}. We assume the step size of the binary search is $0.01$. We observe that the order-up-to-level $\hat{S}_1=70.2658$ and the expected total cost $G_1^s(70.2658)=266.298$. We then set $low=-200$, $high=70.2658$. While $low < high$, the $mid$ is updated via the comparison of $G_1^s(I_0^s)$ and $G_1^s(70.2658)$. After a number of iterations, we obtain the reorder point $\hat{s}_1=15$ at which $G_1^s(15)=G_1^s(70.2658)$.  By repeating this procedure we obtain $\hat{S}_t$, $\hat{s}_t$, and $G^s_t(s_t)$, for each period $t=1, \ldots,4$ as displayed in Table \ref{binarysearhdetail}.

\begin{table}[!htbp]
\centering
\begin{tabular}{|l|cccc|}
\hline
t &1&2&3&4\\
\hline
$s_t$ & 15 & 29.01 & 58.1 & 29.01\\
$S_t$ & 70.2658 & 53.9768 & 116.5530 & 53.9768\\
$G^s_t(s_t)$ & 366.138 &311.369 &193.338 & 118.031\\
\hline
\end{tabular}
\caption{Near-optimal $(s, S)$ policy parameters obtained via the binary search approach}
\label{binarysearhdetail}
\end{table}

\section{Computational experience}\label{computationalstudy}
\noindent
In this section we present an extensive analysis of the heuristics discussed in Sections \ref{milpmodel} (MP) and \ref{binarysearchmodel} (BS). We first design a test bed featuring instances defined over an $8$-period planning horizon. On this test bed, we assess the behaviour of the optimality gap and the computational efficiency of both the MP and BS heuristics. Then we assess the computational performance of our the BS heuristics on a test bed featuring larger instances on a $25$-period planning horizon. For all cases, MINLP models are solved by employing the piecewise linearization strategy discussed in \cite{rkt2015}, which can be easily implemented in OPL by means of the \texttt{piecewise} syntax. Numerical examples are conducted by using the IBM ILOG CPLEX Optimization Studio 12.7 and MATLAB R2014a on a 3.2GHz Intel(R) Core(TM) with 8GB of RAM. 

\subsection{An $8$-period test bed}\label{eightperiodtestbed}
\noindent
We consider a test bed which includes $270$ instances. Specifically, we incorporate ten demand patterns displayed in Fig. \ref{demandpatterns}. These patterns comprising two life cycle patterns (LCY1 and LCY2), two sinusoidal patterns (SIN1 and SIN2), a stationary pattern (STA), a random pattern (RAND), and four empirical patterns (EMP1, ..., EMP4). Full details on the experimental setup are given in \ref{testbed}. Fixed ordering cost $K$ ranges in $\{200, 300, 400\}$, the penalty cost $b$ takes values $\{5, 10, 20\}$. We assume that demand $d_t$ in each period $t$ is independent and normally distributed with mean $\tilde{d_t}$ and coefficient of variation $c_v \in \{0.1, 0.2, 0.3\}$; note that $\sigma_t=c_v\tilde{d_t}$. Since we operate under the assumption of normality, our models can be readily linearised by using the piecewise linearisation parameters available in \cite{rtph2014}. However, the reader should note that our proposed modeling strategy is distribution independent, see \cite{rkt2015}.
\begin{figure}[!htbp]
\centering
\subfigure{
 \includegraphics[scale=0.55]{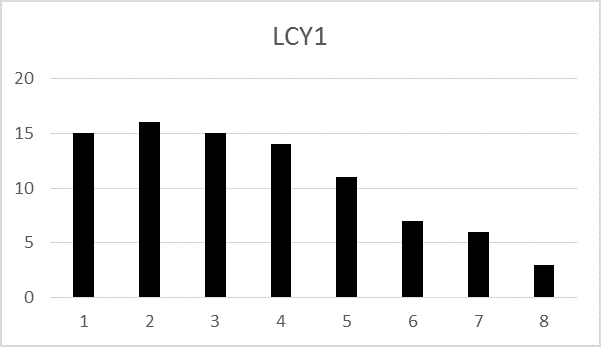}}
 \hspace{0.035in}   
\subfigure{
 \includegraphics[scale=0.55]{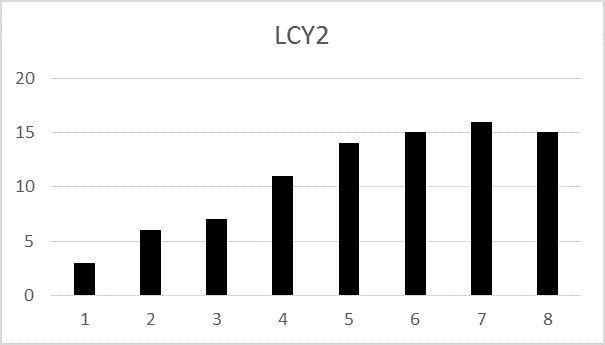}}
\subfigure{
 \includegraphics[scale=0.55]{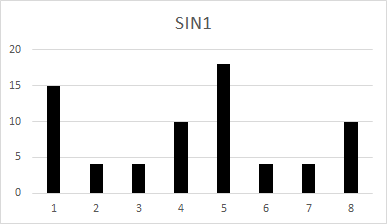}}
 \hspace{0.035in}
\subfigure{
 \includegraphics[scale=0.55]{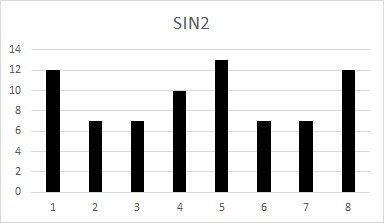}}
\subfigure{
 \includegraphics[scale=0.55]{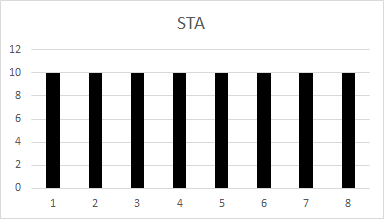}}
 \hspace{0.035in}
\subfigure{
 \includegraphics[scale=0.55]{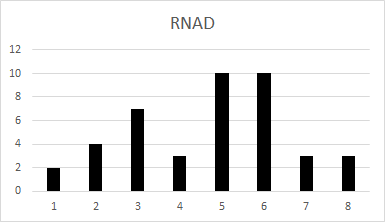}}
\subfigure{
 \includegraphics[scale=0.55]{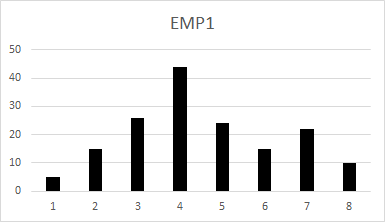}}
 \hspace{0.035in}
\subfigure{
 \includegraphics[scale=0.55]{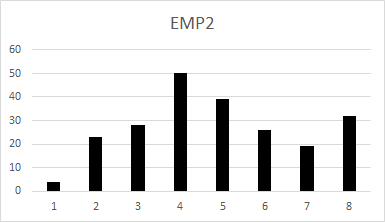}}
\subfigure{
 \includegraphics[scale=0.55]{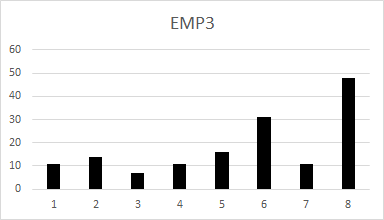}}
 \hspace{0.035in}
\subfigure{
 \includegraphics[scale=0.55]{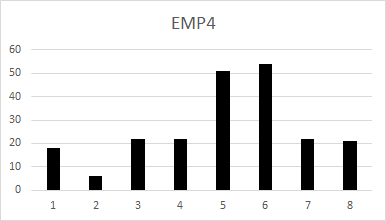}}
\caption{Demand patterns in our computational analysis}
\label{demandpatterns}
\end{figure}

We set the SDP model discussed in Section \ref{problemdescription} as a benchmark. We compare against this benchmark in terms of optimality gap and computational time.  First of all, we obtain optimal parameters for each test instance by implementing an SDP algorithm in MATLAB. Then, we solve each instance by adopting both modelling heuristics presented in Section \ref{milpmodel} and \ref{binarysearchmodel}. Specifically, for the MP heuristic we employ six segments in the piecewise-linear approximations of $B_{t}$ and $H_{t}$ (for $t=1,\ldots, T$) in order to guarantee reasonable computational performances; for the BS heuristic, whose computational performance is only marginally affected by an increased number of segments in the linearisation, we employ eleven segments and a step size $0.1$. To estimate the cost of the policies obtained via our heuristics, we simulate all policies via Monte Carlo Simulation (10,000 replications). 

Table \ref{shortperiodsexample} gives an overview of optimality gaps in terms of modelling methods and parameter settings. Both heuristics perform better when demand pattern is rather steady. It is difficult to make a general remark with respect to fixed ordering cost. Both methods perform worse as penalty cost increases. More specifically, when penalty cost increases from $10$ to $20$, the optimal gap rises from $0.28\%$ to $0.38\%$ and from $0.25\%$ to $0.44\%$, respectively. Similarly, performance of these two methods deteriorates as demand variability increases: optimality gap of the BS heuristic increases significantly from $0.18\%$ to $0.39\%$ as the coefficient of variation increases from $0.1$ to $0.3$. Overall, the average optimality gap of the MP heuristic is $0.33\%$, and that of the BS method is $0.28\%$. This discrepancy ought to be expected, since in the case of the BS method a higher number of segments has been employed.

\begin{table}[!htbp]
\centering
\begin{tabular}{|lcc|}
\hline
Modelling methods & MP & BS\\
\hline
\multicolumn{3}{|l|}{Demand pattern}\\
LCY1&0.28&0.39\\
LCY2&0.26&0.15\\
SIN1&0.18&0.14\\
SIN2&0.17&0.16\\
STA&0.25&0.23\\
RAND&0.14&0.16\\
EMP1&0.41&0.36\\
EMP2&1.01&0.78\\
EMP3&0.17&0.17\\
EMP4&0.44&0.21\\
\hline
\multicolumn{3}{|l|}{Fixed ordering cost}\\
200 &0.32&0.28\\
300&0.29&0.20\\
400&0.38&0.34\\
\hline
\multicolumn{3}{|l|}{Penalty cost}\\
5&0.19&0.14\\
10&0.28&0.25\\
20&0.38&0.44\\
\hline
\multicolumn{3}{|l|}{Coefficient of variation}\\
0.1&0.22&0.18\\
0.2&0.32&0.25\\
0.3&0.46&0.39\\
\hline
Average gap&0.33&0.28\\
\hline
\end{tabular}
\label{shortperiodsexample}
\caption{Average optimality gaps $\%$ of the 8-period test bed with different setting parameters and modelling methods}
\end{table}

Existing heuristics \cite{a1981} and \cite{bm1999} were reimplemented by \cite{dktr2016} and assessed on a test bed that neatly resembles the one adopted in this work. As shown in \cite{dktr2016}, Askin's optimality gap is $3.9\%$, and Bollapragada and Morton's is $4.9\%$. The optimality gap of our heuristic is $0.33\%$ when six segments are employed in the piecewise linearisation, and it drops to $0.28\%$ when eleven segments are employed. Our models therefore outperform both \cite{a1981} and \cite{bm1999} in terms of optimality gap on the test bed here considered.

Table \ref{shortperiodtime} shows computational times with regard to different setting parameters and modelling methods. Note "STDEV" in Table \ref{shortperiodtime} represents the standard deviation. The average computational time of the MP heuristic is $51.01s$, that of the BS method is $7.64s$, and that of the SDP model is $60.21s$. The computational times of the SDP and of the MP model vary significantly for different demand patterns considered, while that of the BS method remains stable. In particular, when the demand setting is EMP3, the average computational time of the MP model is $286.21s$; whereas, when the demand setting is EMP4, it is just $22.41s$. We observe that fixed ordering cost, penalty cost, and coefficient of variation do not have considerable influence on computational time of small-scale instances. Additionally, standard deviation of the MP model and of the SDP model fluctuate significantly, while that of the BS tend to remain stable. 

\begin{table}[!htbp]
\centering
\begin{tabular}{|lcccccc|}
\hline
\multirow{2}{*}{Settings} &\multicolumn{2}{|c}{MP} & \multicolumn{2}{|c}{BS}&\multicolumn{2}{|c|}{SDP}\\
\cline{2-7}
&\multicolumn{1}{|c}{Mean}&\multicolumn{1}{|c}{STDEV}&\multicolumn{1}{|c}{Mean}&\multicolumn{1}{|c}{STDEV}&\multicolumn{1}{|c}{Mean}&\multicolumn{1}{|c|}{STDEV}\\
\hline
\multicolumn{7}{|l|}{Demand pattern}\\
LCY1&4.07&0.81&8.22&0.66&14.42&0.03\\
LCY2&25.73&47.36&8.15&0.76&14.41&0.03\\
SIN1&3.88&0.74&6.90&0.64&14.41&0.02\\
SIN2&3.85&0.62&6.70&0.70&14.37&0.08\\
STA&9.18&21.13&6.84&0.63&7.69&0.05\\
RAND&3.48&0.51&7.48&1.07&7.50&0.06\\
EMP1&53.32&140.72&8.00&0.82&150.13&1.12\\
EMP2&97.99&162.94&8.17&0.77&114.44&1.31\\
EMP3&286.21&636.73&7.49&0.72&114.46&1.09\\
EMP4&22.41&40.25&8.45&0.89&150.24&0.35\\
\hline
\multicolumn{7}{|l|}{Fixed ordering cost}\\
200&88.81&365.45&7.71&0.97&60.17&59.96\\
300&33.75&99.76&7.62&0.92&60.29&60.07\\
400&30.48&109.84&7.59&1.07&60.16&59.99\\
\hline
\multicolumn{7}{|l|}{Penalty cost}\\
5&81.62&343.68&7.44&0.93&60.34&60.14\\
10&51.78&182.68&7.56&0.86&60.24&60.03\\
20&19.63&65.62&7.92&1.06&60.04&59.83\\
\hline
\multicolumn{7}{|l|}{Coefficient of variation}\\
0.1 &39.22&165.33&7.66&1.00&60.23&60.01\\
0.2&76.09&348.42&7.66&0.91&60.18&59.98\\
0.3&37.73&89.68&7.59&1.05&60.20&60.03\\
\hline
Average&51.01&51.01&7.64&0.99&60.21&60.21\\
\hline
\end{tabular}
\caption{Average computational times (seconds) of the 8-period test bed with different setting parameters and modelling methods}
\label{shortperiodtime}
\end{table}

\subsection{A $25$-period test bed}
\noindent
As shown in Section \ref{eightperiodtestbed} for the $8$-period test bed, both the MP and the BS methods provide tight optimality gaps and acceptable computational efficiency. We now extend the $8$-period test bed to $25$ periods with larger instances. Demands of LCY1, LCY2, SIN1, SIN2, STA,  and RAND are generated with expressions (\ref{LCY1}), (\ref{LCY2}), (\ref{SIN1}), (\ref{SIN2}), (\ref{STA}), and (\ref{RAND}) in Fig. \ref{expression}. Demands of EMP1, EMP2, EMP3 and EMP4 are derived from \cite{ssb2011}. Full details are given in \ref{testbed}. Assume that fixed ordering cost ranges in $\{500, 1000, 1500\}$, penalty cost takes values $\{5, 10, 20\}$, and the coefficients of standard deviations are $\{0.1, 0.2, 0.3\}$. 
\begin{figure}[!htbp]
\begin{tcolorbox}
\begin{align}
&d_t=\text{round}(\frac{190\times e^{-(t-13)^2}}{2\times5^2}),&t=1, 2, \ldots, T\label{LCY1}\\
&d_t=\text{round}(\frac{170\times e^{-(t-13)^2}}{2\times6^2}),&t=1, 2, \ldots, T\label{LCY2}\\
&d_t=\text{round}\Big(70\times \text{sin}(0.8t)+80\Big),&t=1, 2, \ldots, T\label{SIN1}\\
&d_t=\text{round}\Big(30\times \text{sin}(0.8t)+100\Big),&t=1, 2, \ldots, T\label{SIN2}\\
&d_t=100,& t = 1, 2, \ldots, T\label{STA}\\
&d_t=\text{round}(\text{random}(0, 250)),&t=  1, 2, \ldots, T\label{RAND}
\end{align}
\end{tcolorbox}
\caption{Expressions for generating demand data}
\label{expression}
\end{figure}

We obtain optimal $(s, S)$ parameters and record computational times obtained via the BS method. For the first $15$ periods we perform binary search with step size $1$ in order to ensure fast convergence; for the last $10$ periods, we adopt a step size $0.1$ to enhance accuracy. The number of segments used in the piecewise linearisation is eleven. To estimate the cost of the policy obtained via our approximation, we simulate each instance one million times in MATLAB. We summarise computational times in Table \ref{longperiodtime}. 
\begin{table}[!htbp]
\centering
\begin{tabular}{|lcc|}
\hline
Settings & Mean & standard deviation\\
\hline
\multicolumn{3}{|l|}{Demand pattern}\\
LCY1&588.18&213.91\\
LCY2&806.25&338.10\\
SIN1&579.45&181.66\\
SIN2&1767.06&688.88\\
STA&1933.07&760.81\\
RAND&458.99&120.79\\
EMP1&696.20&123.23\\
EMP2&201.08&36.72\\
EMP3&1054.01&316.17\\
EMP4&187.17&44.98\\
\hline
\multicolumn{3}{|l|}{Fixed ordering cost}\\
500 &1039.49&901.76\\
1000&844.54&583.64\\
1500&597.41&362.24\\
\hline
\multicolumn{3}{|l|}{Penalty cost}\\
5&792.97&615.24\\
10&871.05&749.53\\
20&817.42&663.10\\
\hline
\multicolumn{3}{|l|}{Coefficient of variation}\\
0.1&744.61&617.16\\
0.2&838.61&682.91\\
0.3&898.11&723.86\\
\hline
Average&827.15&679.02\\
\hline
\end{tabular}
\caption{BS heuristics on a $25$-period test bed, average computational times (seconds) with different setting parameters}
\label{longperiodtime}
\end{table}

According to Table \ref{longperiodtime}, the computational time drops dramatically from $1039.40s$ to $597.41s$ as the fixed ordering cost increases from $500$ to $1500$. In contrast, with the increase of  coefficient of variation, the computational times rise significantly. For instance, when the coefficient of variation rises from $0.1$ to $0.2$, the computational time increases from $744.61s$ to $838.61s$. Whereas, standard deviations are large for all test instances. On average, the computational time is $827.15s$ and the standard deviation is $679.02s$.

\section{Conclusion}\label{conclusion}
\noindent
In this paper we discussed two MINLP-based heuristics for tackling non-stationary stochastic lot-sizing problems under $(s, S)$ policy. These heuristics are based on mathematical programming models that can be solved by using off-the-shelf optimization packages. More specifically, we introduced the first MINLP model for computing near-optimal nonstationary ($s,S$) policy parameters and a binary search strategy to tackle larger-size problems. These MINLP models can be linearised via the approach discussed in \cite{rkt2015} and can be implemented in OPL by adopting the \texttt{piecewise} expression. 

We conducted an extensive computational study comprising $270$ instances. We considered ten demand patterns, three fixed ordering costs, three penalty costs and three coefficients of variation. 

For the 8-period numerical study, we investigated the performance of both models by contrasting costs of the policy obtained with our models against costs of the optimal policy obtained via the stochastic dynamic programming. Optimality gaps observed are generally below $0.3\%$. Our sensitivity analysis showed that the optimality gap is tighter when the demand keeps stable, and performance deteriorate with the increase of the penalty cost and the coefficient of variation; both models provide tighter gaps than those reported in the literature \citep{a1981,bm1999}. 

The computational study carried out on larger instances (25-period planning horizon) showed that the computational efficiency of the binary search approach is reasonable: around $827.15s$ on average. Our sensitivity analysis demonstrates that the computational time is positively correlated to the penalty cost and coefficient of demand variation, and has negative correlation with the fixed ordering cost.

\bibliography{elsarticle-template}
\appendix 
\section{The \texttt{piecewise} OPL constraint}\label{oplsyntax}
\noindent
\cite{rkt2015} piecewise linearised loss functions in constraints (\ref{sS1-3}) and (\ref{sS1-4}) by employing piecewise linear approximations based on Jesen's and Edmundson-Madanski inequalities. An alternative strategy is to model these non-linear functions by exploring the  \texttt{piecewise} syntax in OPL. By using this syntax, a piecewise function is specified by giving a set of slopes which represent  the linear variation for each linear segment; a set of breakpoints at which slopes change; and the function value at a known point. 

\begin{figure}[!htbp] 
\centering
\begin{tcolorbox}
\begin{verbatim}
piecewise(i in 1..W){
slope[i] -> breakpoint[i];
 slope[W+1]
}(<knownpoint>,<valuepoint>)<value>;
\end{verbatim}
\end{tcolorbox}
\caption{The syntax of the \texttt{piecewise} command in OPL}
\label{syntax}
\end{figure}

The \texttt{piecewise} syntax in OPL is given in Figure \ref{syntax}. \texttt{W} is the number of breakpoints of the piecewise function. \texttt{slope[i]} and \texttt{breakpoint[i]} denote slope and breakpoint of segment $i$. Segment $i$ goes from breakpoint ($i-1$) to breakpoint ($i$). \texttt{<valuepoint>} is the function value at a known point \texttt{<knownpoint>}. Finally, \texttt{<value>} represents the value at which we evaluate the function. 

For the OPL \texttt{piecewise} syntax, there are three key components: slope, breakpoint, and function value at a known point. The following lemmas will demonstrate how to deduce their values. Let $\Omega$ be the support of $\omega$. Let $(\Omega_i)_{i=1,\ldots, W+1}$ be a partition of $\Omega$ in $W+1$ segments.

\begin{mylemma}\label{slope}
The slope of $i^{th}$ segment is written as
\[l_i=\sum_{k=1}^{i-1}p_k, i  \in \{1,2, \ldots, W+1\},\]
where $p_i=Pr\{\omega \in \Omega_i\}=\int_{\Omega_i}g_{\omega}(t)dt$, $g_{\omega}(\cdot)$ denotes the probability density function of $\omega$.
\end{mylemma}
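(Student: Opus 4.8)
The plan is to compute the complementary first order loss function $\hat{L}(x,\omega) = \text{E}[\max(x-\omega,0)]$ directly and read off its slope on each segment $\Omega_i$ of the partition. First I would write $\hat{L}(x,\omega) = \int_{\Omega} \max(x-t,0)\, g_\omega(t)\, dt = \int_{\{t \le x\}} (x-t)\, g_\omega(t)\, dt$, and differentiate with respect to $x$. Since the integrand $\max(x-t,0)$ has derivative $\mathbf{1}_{\{t < x\}}$ in $x$ (the boundary term at $t=x$ contributes nothing because the integrand vanishes there), we get $\frac{d}{dx}\hat{L}(x,\omega) = \int_{\{t \le x\}} g_\omega(t)\, dt = Pr\{\omega \le x\}$. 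This is the cumulative distribution function evaluated at $x$, which is exactly the quantity that, when $x$ ranges over the $i^{th}$ segment $\Omega_i$, should be well-approximated by the constant $\sum_{k=1}^{i-1} p_k = Pr\{\omega \in \Omega_1 \cup \cdots \cup \Omega_{i-1}\}$, i.e. the probability mass lying strictly below segment $\Omega_i$.

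The key steps, in order, are: (i) express $\hat{L}(x,\omega)$ as an integral over the region $\{t \le x\}$; (ii) justify differentiating under the integral sign (dominated convergence / Leibniz rule, valid because $g_\omega$ is a density and the integrand is Lipschitz in $x$ uniformly in $t$); (iii) obtain $\hat{L}'(x,\omega) = Pr\{\omega \le x\}$; (iv) observe that a piecewise-linear interpolation of $\hat{L}$ with breakpoints at the segment boundaries has, on the $i^{th}$ segment, slope equal to the average of $Pr\{\omega \le x\}$ over that segment — and that for the Jensen-type lower-bounding approximation one takes the representative value $Pr\{\omega \le \inf \Omega_i\} = \sum_{k=1}^{i-1} p_k$, since the mass in $\Omega_1, \ldots, \Omega_{i-1}$ is precisely $\sum_{k=1}^{i-1} p_k$ and the mass in $\Omega_i$ itself is split at the breakpoint. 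For the first segment the empty sum gives slope $0$, consistent with $\hat{L} \equiv 0$ below the support; for the last segment ($i = W+1$) the sum is $\sum_{k=1}^{W} p_k = 1$, consistent with $\hat{L}(x,\omega) \sim x - \text{E}[\omega]$ having unit slope in the right tail.

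The main obstacle — really the only subtlety — is matching the continuous derivative $Pr\{\omega \le x\}$, which varies within each segment, to the single constant slope $l_i$ assigned to segment $i$ in the piecewise-linear model. Strictly, $l_i$ is the slope of the chord of $\hat{L}$ across $\Omega_i$, i.e. $l_i = \frac{1}{|\Omega_i|}\int_{\Omega_i} Pr\{\omega \le x\}\, dx = \frac{1}{|\Omega_i|}\int_{\Omega_i}\!\int_{\{t \le x\}} g_\omega(t)\, dt\, dx$; switching the order of integration turns this into $\sum_{k<i} p_k + \frac{1}{|\Omega_i|}\int_{\Omega_i}(\sup\Omega_i - t)\, g_\omega(t)\, dt$, whose second term is the fraction of the mass of $\Omega_i$ that contributes. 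In the intended construction (breakpoints placed so that each conditional expectation is represented exactly, following the Jensen/Edmundson–Madanski scheme of \cite{rkt2015}), this reduces cleanly to $l_i = \sum_{k=1}^{i-1} p_k$, and I would present the argument in that setting. So the proof is essentially: differentiate the loss function, recognise the CDF, and identify the accumulated probability mass on segments preceding $\Omega_i$ as the correct slope.
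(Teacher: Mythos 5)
The paper does not actually prove this lemma---its ``proof'' is a one-line citation to Lemma 11 of \cite{rtph2014}---so your attempt to argue from first principles is doing more than the authors did, and your central identity, $\frac{d}{dx}\hat{L}(x,\omega)=Pr\{\omega\le x\}$, is exactly the right engine. The gap is in your step (iv), where you convert this continuous derivative into the constant slopes $l_i$. You identify the $i$-th linear piece of the approximation with the partition region $\Omega_i$ and compute $l_i$ as the chord slope of the true $\hat{L}$ across $\Omega_i$; your own calculation then yields $l_i=\sum_{k<i}p_k+\frac{1}{|\Omega_i|}\int_{\Omega_i}(\sup\Omega_i-t)\,g_\omega(t)\,dt$, and the second term does not vanish merely because the breakpoints sit at conditional means---asserting that it ``reduces cleanly'' in the intended construction is precisely the step that is missing. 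Moreover, in the Jensen/Edmundson--Madanski scheme the $i$-th linear piece is not $\Omega_i$ at all: it is the interval between consecutive breakpoints $X_{i-1}=E[\omega\mid\Omega_{i-1}]$ and $X_i=E[\omega\mid\Omega_i]$, which straddles the boundary between two partition regions, and the approximant is not a chord interpolation of $\hat{L}$.

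The fix is short and uses your own step (iii). The Jensen lower bound is, by construction, the complementary loss function of the discretised random variable $\omega^*$ that takes the value $X_k=E[\omega\mid\Omega_k]$ with probability $p_k$, i.e. $\hat{L}_{\mathrm{lb}}(x)=\sum_k p_k\max(x-X_k,0)=\hat{L}(x,\omega^*)$. Apply derivative-equals-CDF to $\omega^*$ rather than to $\omega$: the CDF of $\omega^*$ is a step function identically equal to $\sum_{k=1}^{i-1}p_k$ on the open interval $(X_{i-1},X_i)$, so the $i$-th linear piece has slope exactly $\sum_{k=1}^{i-1}p_k$, with no averaging and no residual term. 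A small further slip: with the partition into $W+1$ regions as stated in the lemma, your tail check gives $l_{W+1}=\sum_{k=1}^{W}p_k=1-p_{W+1}<1$, not $1$; the unit slope only appears beyond the conditional mean of the \emph{last} region, so your sanity check is consistent with a $W$-region partition rather than the $(W+1)$-region one in the statement.
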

\begin{proof}
Observation from \cite{rtph2014}, Lemma 11.
\end{proof}

\begin{mylemma}\label{breakpoint}
The $i^{th}$ breakpoint can be written as 
\[
X_i=E[\omega|\Omega_i], i \in \{1,2, \ldots, W\}.
\]
\end{mylemma}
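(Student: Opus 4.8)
The plan is to show that the piecewise-linear approximation of the first order loss function $\hat L(x,\omega)$ (and, symmetrically, $L(x,\omega)$) that underlies the OPL \texttt{piecewise} encoding must place its breakpoints at the conditional expectations $E[\omega \mid \Omega_i]$. The starting point is the well-known Edmundson--Madansky-type construction of a lower bound on the convex function $\hat L(\cdot,\omega)$: partition the support $\Omega$ into consecutive segments $\Omega_1,\ldots,\Omega_{W+1}$, and on each segment replace $\hat L$ by the affine function agreeing with it in the conditional-expectation sense. The resulting approximation is piecewise linear, continuous, and convex, and by Lemma~\ref{slope} its slope on segment $i$ equals $l_i=\sum_{k=1}^{i-1}p_k$. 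What remains is to locate where consecutive slopes change, i.e.\ the breakpoints $X_i$.

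First I would write the piecewise-linear approximation explicitly as the maximum (equivalently, the sum of the increments) of the affine pieces, using the fact that a convex piecewise-linear function with slopes $l_1<l_2<\cdots<l_{W+1}$ and breakpoints $X_1<\cdots<X_W$ can be reconstructed from its slope data together with one function value. Then I would identify the $i$-th breakpoint as the abscissa at which the affine piece of slope $l_i$ and the affine piece of slope $l_{i+1}$ intersect. The key computation is to evaluate the exact loss function and its subgradient: $\hat L(x,\omega)=E[\max(x-\omega,0)]$ has right-derivative $\partial_x \hat L(x,\omega)=Pr\{\omega\le x\}$, so the approximating slope $l_i=\sum_{k=1}^{i-1}p_k=Pr\{\omega\in\Omega_1\cup\cdots\cup\Omega_{i-1}\}$ is exactly the true subgradient evaluated at the right endpoint of $\Omega_{i-1}$ (equivalently the left endpoint of $\Omega_i$). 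The affine piece on segment $\Omega_i$ is the one that is tangent-in-the-Jensen-sense to $\hat L$ over that segment; requiring that this affine piece reproduce the correct expected value $E[\hat L(\,\cdot\,,\omega)\mid \omega\in\Omega_i]$ forces it to pass through the point $\big(E[\omega\mid\Omega_i],\,\hat L(E[\omega\mid\Omega_i],\omega)\big)$ by Jensen applied conditionally on $\Omega_i$. Intersecting the affine piece for $\Omega_i$ with that for $\Omega_{i+1}$ and simplifying (the $p_k$ terms telescope) then yields $X_i=E[\omega\mid\Omega_i]$.

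Concretely, the steps in order are: (i) recall from Lemma~\ref{slope} and \cite{rtph2014} that the approximating function is the convex piecewise-linear function with segment slopes $l_i$; (ii) express this function via its affine pieces $a_i(x)=l_i x + c_i$ and note continuity pins down the constants $c_i$ once one breakpoint is fixed; (iii) compute, for each segment, the affine piece that matches the conditional expectation of the exact loss function over $\Omega_i$, and show via conditional Jensen that it is exact at $x=E[\omega\mid\Omega_i]$; (iv) set $a_i(X_i)=a_{i+1}(X_i)$, substitute the slope formula, and simplify to obtain $X_i=E[\omega\mid\Omega_i]$; (v) invoke the symmetry $L(x,\omega)=\hat L(x,\omega)-x+E[\omega]$ (which shifts by an affine term and hence leaves breakpoints unchanged) to cover constraint~(\ref{sS1-4}) as well. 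The main obstacle I anticipate is bookkeeping rather than conceptual: one must be careful that the "conditional expectation" characterizing each affine piece is consistent with the global continuity constraint, so that the locally-defined pieces actually glue into a single convex function whose kinks sit precisely at the $X_i$ — in other words, verifying that the piecewise data $(l_i,X_i)$ is self-consistent and reproduces the intended Edmundson--Madansky approximation, rather than merely checking one intersection in isolation.
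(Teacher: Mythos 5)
First, note that the paper does not actually prove this lemma: its ``proof'' is a one-line citation (``Observation from \cite{rtph2014}, Lemma 11''), so your attempt to reconstruct the argument is doing more work than the paper itself. Your overall strategy --- build the Jensen/Edmundson--Madansky piecewise-linear approximation over the partition $(\Omega_i)$, read off the slopes from Lemma~\ref{slope}, and locate the kinks --- is the right one and is essentially what the cited result formalises. Your final intersection computation is also correct: with affine pieces $a_i(x)=\sum_{k<i}p_k\,(x-X_k)$ one gets $a_{i+1}(x)-a_i(x)=p_i\,(x-X_i)$, which vanishes exactly at $x=X_i=E[\omega\mid\Omega_i]$.

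However, step (iii) as you state it is wrong, and it is the step you rely on to pin down the affine pieces. Conditional Jensen gives $E[\max(x-\omega,0)\mid\Omega_i]\ge\max(x-E[\omega\mid\Omega_i],0)$ with equality only when $\max(x-\cdot,0)$ is affine on $\Omega_i$; at $x=E[\omega\mid\Omega_i]$ the kink of $\omega\mapsto\max(x-\omega,0)$ sits in the interior of $\Omega_i$, so the inequality is generically strict. The approximating function therefore does \emph{not} pass through $\big(E[\omega\mid\Omega_i],\,\hat L(E[\omega\mid\Omega_i],\omega)\big)$; in fact the approximation error is largest near the breakpoints, which is precisely why Lemma~\ref{functionvalue} must carry the correction term $e_W$. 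The clean repair is to skip the tangency argument entirely: the Jensen approximation is by construction $\hat L(x,\tilde\omega)=\sum_{i=1}^{W+1}p_i\max\big(x-E[\omega\mid\Omega_i],0\big)$, where $\tilde\omega$ is the discrete random variable supported on the conditional means with masses $p_i$. The kinks of this function are, by inspection, exactly its support points $E[\omega\mid\Omega_i]$, which gives the lemma directly (and re-derives Lemma~\ref{slope} as a by-product); your affine-shift observation in (v), $L(x,\omega)=\hat L(x,\omega)-x+E[\omega]$, then correctly transfers the breakpoints to the loss function in constraint~(\ref{sS1-4}).
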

\begin{proof}
Observation from \cite{rtph2014}, Lemma 11.
\end{proof}

Note that when $\omega$ follows a normal distribution with mean $\mu$ and standard deviation $\sigma$, then $\hat{L}_{\text{up}}(x, \omega)=\sigma \hat{L}_{\text{up}}(\frac{x-\mu}{\sigma},Z)$,
where $Z$ follows a standard normal distribution, see Lemma 7 in \cite{rtph2014}.
\begin{mylemma}\label{functionvalue}
Assume that the partition of $\Omega$ is symmetric with respect to $0$, then the function value $\hat{L}_{\text{up}}(x, \omega)$ at point $0$ can be written as follows.
\[\hat{L}_{\text{up}}(0, \omega)=\begin{cases}
-\sum_{k=1}^{\frac{W+1}{2}}p_kE[\omega|\Omega_k]+e_W, &\text{W is odd}\\
-\frac{1}{2}(\sum_{k=1}^{\frac{W}{2}}p_kE[\omega|\Omega_k]+\sum_{k=1}^{\frac{W}{2}+1}p_kE[\omega|\Omega_k])+e_W, &\text{W is even}
\end{cases}
\]
where $e_W$ represents the approximation error. 
\end{mylemma}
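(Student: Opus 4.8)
The plan is to evaluate $\hat{L}_{\text{up}}(0,\omega)$ directly from its definition and then to re-express it through the partition data $p_k$ and $\text{E}[\omega\mid\Omega_k]$ already introduced in Lemmas~\ref{slope} and~\ref{breakpoint}; the term $e_W$ will absorb exactly the part of the value that these quantities cannot recover, so that the right-hand side is precisely the value one has to report at the known point $0$ when calling the \texttt{piecewise} construction of Figure~\ref{syntax}. The starting point is the pointwise identity $\max(x-\omega,0)\big|_{x=0}=\max(-\omega,0)=-\omega\,\mathbf{1}\{\omega\le 0\}$, so that
\[
\hat{L}_{\text{up}}(0,\omega)=\text{E}[\max(-\omega,0)]=-\text{E}\!\left[\omega\,\mathbf{1}\{\omega\le 0\}\right]=-\sum_{k=1}^{W+1}\text{E}\!\left[\omega\,\mathbf{1}\{\omega\le 0,\ \omega\in\Omega_k\}\right].
\]
I would then classify each segment $\Omega_k$ by its position relative to $0$: a segment contained in $(-\infty,0]$ contributes $-p_k\,\text{E}[\omega\mid\Omega_k]$, a segment contained in $[0,\infty)$ contributes $0$, and at most one segment can contain $0$ in its interior; which of these situations occurs is dictated by the parity of $W$ through the assumed symmetry of the partition about $0$.

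If $W$ is odd then $W+1$ is even, so symmetry forces $0$ to be a segment endpoint; consequently $\Omega_1\cup\dots\cup\Omega_{(W+1)/2}=\Omega\cap(-\infty,0]$ and no segment straddles $0$. The decomposition then collapses to $\hat{L}_{\text{up}}(0,\omega)=-\sum_{k=1}^{(W+1)/2}p_k\,\text{E}[\omega\mid\Omega_k]$, which is the stated formula with $e_W=0$ when the partition covers $\Omega$ exactly, and with $e_W$ equal to the (small) contribution of the truncated left tail when, as in the actual linearisation, $\Omega$ is replaced by a bounded range.

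If $W$ is even then $W+1$ is odd and symmetry places $0$ in the interior of the central segment $\Omega_{(W+2)/2}$, whose conditional mean vanishes. I would isolate the contribution of its negative half $\Omega^-:=\Omega_{(W+2)/2}\cap(-\infty,0]$ and use the symmetry of the central segment to rewrite $-\text{E}[\omega\,\mathbf{1}_{\Omega^-}]=\text{E}[|\omega|\,\mathbf{1}_{\Omega^-}]=\tfrac12\,p_{(W+2)/2}\,\text{E}[\,|\omega|\mid\Omega_{(W+2)/2}]$. Since the vanishing quantity $p_{(W+2)/2}\,\text{E}[\omega\mid\Omega_{(W+2)/2}]$ can be appended to either of the two partial sums without changing its value, the result rearranges into the claimed averaged form with $e_W=\tfrac12\,p_{(W+2)/2}\,\text{E}[\,|\omega|\mid\Omega_{(W+2)/2}]$. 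To close I would note that the normalisation $\hat{L}_{\text{up}}(x,\omega)=\sigma\,\hat{L}_{\text{up}}\big((x-\mu)/\sigma,Z\big)$ makes ``point $0$'' the standardised mean, so the lemma supplies exactly the anchor value needed whenever the partition is symmetric about the mean.

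\textbf{Main obstacle.} The delicate case is $W$ even: the central segment genuinely crosses $0$, and its first two conditional moments do not by themselves determine how its mass splits on either side of $0$, so one is forced either to carry the mean-absolute-deviation term explicitly (the route above, which pins $e_W$ down exactly) or to accept it as error. The accompanying point to verify is that $e_W\to 0$ as the partition is refined --- both $p_{(W+2)/2}$ and the width of the central segment shrink --- which is what legitimises using the closed form as the anchor in practice. A secondary, purely bookkeeping difficulty is keeping the indexing consistent with Lemmas~\ref{slope}--\ref{breakpoint} (there are $W+1$ segments but only $W$ breakpoints) and, as a cross-check, confirming that the same expression is recovered by telescoping the piecewise slopes $l_i$ of Lemma~\ref{slope} from a reference breakpoint up to $0$.
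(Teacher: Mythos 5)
Your parity analysis of the geometry is right (symmetry places $0$ at a cell boundary when $W$ is odd and in the interior of the central cell $\Omega_{W/2+1}$ when $W$ is even), and your evaluation of the leading sums is more explicit than the paper's two-sentence sketch. But you are computing the wrong object, and the discrepancy is concentrated in what $e_W$ is. The lemma is not about the true complementary loss function $\hat{L}(0,\omega)=\mathrm{E}[\max(-\omega,0)]$: the subscript ``up'' refers to the piecewise-linear \emph{upper-bound approximant} of \cite{rtph2014}, namely the Jensen lower bound $\sum_k p_k\max(x-\mathrm{E}[\omega|\Omega_k],0)$ shifted up uniformly by the precomputed maximum approximation error $e_W$ --- the same tabulated constant that appears in constraints (\ref{picewiseholding})--(\ref{picewisepenalty}) and (\ref{jointmilp1})--(\ref{jointmilp2}). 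What the lemma must deliver is the value of \emph{that} piecewise-linear function at the anchor point $0$ required by the OPL \texttt{piecewise} statement; the paper obtains it by reading the approximant off at $0$, either directly at a breakpoint or as the average of the two innermost symmetric breakpoint values. Your $e_W$ is instead a parity-dependent leftover: essentially $0$ when $W$ is odd (where the Jensen bound is tight, since $0$ is a cell boundary) and $\tfrac12 p_{W/2+1}\mathrm{E}[|\omega|\mid\Omega_{W/2+1}]$ when $W$ is even. In the odd case in particular, anchoring the \texttt{piecewise} construct at your value would reproduce the Jensen \emph{lower} bound rather than the upper bound --- the whole function would sit $e_W$ too low --- which defeats the purpose of the shift and is inconsistent with how the same symbol $e_W$ is used elsewhere in the model.

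Two further remarks. Your observation that the two partial sums in the even case coincide (because the central cell's conditional mean vanishes), so that the ``average'' form is redundant, is correct and worth making; but both it and your mean-absolute-deviation identity for $\Omega^-$ need symmetry of the \emph{distribution} about $0$, not merely of the partition --- this is implicitly supplied by the standardisation $\hat{L}_{\text{up}}(x,\omega)=\sigma\hat{L}_{\text{up}}((x-\mu)/\sigma,Z)$ quoted just before the lemma, and should be stated. Finally, the cross-check you defer to the end (telescoping the slopes $l_i$ of Lemma \ref{slope} from a reference breakpoint up to $0$) is essentially the paper's actual argument; carrying it out yields the Jensen-lower-bound value at $0$ plus the \emph{constant} $e_W$, and would have exposed the mismatch above.
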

\begin{proof}
Since the partition of $\Omega$ is symmetric when $W$ is odd, $x=0$ is the central breakpoint. Hence, the function value at this breakpoint can be calculated directly. However, when $W$ is even, the function value at point $x=0$ is the average of nearest two symmetric breakpoints $X_{\frac{W}{2}}$ and $X_{\frac{W}{2}+1}$. 
\end{proof}

Following Lemma \ref{slope}, \ref{breakpoint} and \ref{functionvalue}, constraint (\ref{sS1-3}) and (\ref{sS1-4}) in Fig. \ref{MILPmodel} can be expressed as Eq. (\ref{picewiseholding}) and (\ref{picewisepenalty}) in Fig. \ref{piecewisecostinopl}, for $t=1,\ldots,T$.
\begin{figure}[!htbp]
\begin{tcolorbox}

\begin{multline}
P_{jt}=1 \rightarrow H_{t}=\texttt{piecewise}\{l_i  \to X_i; 1\}(0,\hat{L}_{up}(0,d_{jt}))\tilde{I}_{t}, \\ i = 1,\ldots, W;\  j=1,\ldots,t.\label{picewiseholding}
\end{multline}
\begin{multline}
P_{jt}=1\rightarrow B_{t}=\texttt{piecewise}\{-1+l_i \to X_i; 0\}(0,\hat{L}_{up}(0, d_{jt}))\tilde{I}_{t}, \\  i = 1,\ldots, W;\  j=1,\ldots,t.\label{picewisepenalty}
\end{multline}

\end{tcolorbox}
\caption{Rewriting holding and penalty costs by adopting \texttt{piecewise} syntax}
\label{piecewisecostinopl}
\end{figure}

\section{The MILP model}\label{piecewisemodel}
\noindent
The joint MILP model to calculate near-optimal $(s, S)$ policy parameters for the non-stationary stochastic lot-sizing problem is presented below. Note that we plug in the original fomulations (\ref{jointmilp1}), (\ref{jointmilp2}), (\ref{jointmilp5}), and (\ref{jointmilp6}) to our joint MILP model in order to enhance the computational perforance without excessively compromising solution quality.

{\scriptsize

\begin{align}
\min\Big(&-cI_{0}^S + c\sum_{t=1}^T\tilde{d}_{t}^S+\sum_{t=1}^T(K \delta_t^S + hH_t^S + bB_t^S) +c\tilde{I}_T^S \notag \\
&-cI_{0}^s + c\sum_{t=1}^T\tilde{d}_{t}^s+\sum_{t=2}^T(K \delta_t^s + hH_t^s + bB_t^s) +c\tilde{I}_T^s \Big)
\end{align}
Subject to, $t = 1,\ldots, T$
\begin{align}
&C^S_t(I_{0}^S)=-cI_{0}^S+ c\sum_{t=1}^T\tilde{d}_{t}+\sum_{t=1}^T(K \delta_t^S + hH_t^S + bB_t^S) +c\tilde{I}_T^S\\
&\tilde{I}_{t}^S+ \tilde{d}_{t} - \tilde{I}_{t-1}^S \geq 0\\
&\tilde{I}_{t}^S+ \tilde{d}_{t} - \tilde{I}_{t-1}^S \leq \delta_{t}^SM\\
&\sum_{j=1}^tP_{jt}^S = 1\\
&P_{jt}^S \geq \delta_{j}^S - \sum_{k=j+1}^t \delta_{k},
 j = 1, \ldots, t
\end{align}
\begin{align}
&\delta_{1}^S=1\\
&I_0^S=\tilde{I}_1^S+\tilde{d}_1\\
&H_{t}^{S}\geq (I_t^S+\sum_{j=1}^td_{jt}P_{jt}^S)\sum_{k=1}^ip_k-\sum_{j=1}^t(\sum_{k=1}^ip_kE[d_{jt}|\Omega_i]-e_W)P_{jt}^S, & i = 1, \cdots, W \label{jointmilp1}\\
&B_{t}^{S}\geq  -I_t^S+(I_t^S+\sum_{j=1}^td_{jt}P_{jt}^S)\sum_{j=1}^ip_k-\sum_{j=1}^t(\sum_{k=1}^ip_kE[d_{jt}|\Omega_i]-e_W)P_{jt}^S,  &i = 1, \cdots, W \label{jointmilp2}\\
&P_{jt}^S \in \{0, 1\}, j = 1, \ldots, t\\
&\delta_{t}^S \in \{0, 1\}\\
&G_t^s(I_{0}^s)=-cI_{0}^s + (hH_1^s + bB_1^s) + c\sum_{t=1}^T\tilde{d}_{t}+\sum_{t=2}^T(K \delta_t^s + hH_t^s + bB_t^s) +c\tilde{I}_T^s\\
&\tilde{I}_{t}^s+ \tilde{d}_{t} - \tilde{I}_{t-1}^s \geq 0\\
&\tilde{I}_{t}^s+ \tilde{d}_{t} - \tilde{I}_{t-1}^s \leq \delta_{t}^sM\\
&\sum_{j=1}^tP_{jt}^s = 1\\
&P_{jt}^s \geq \delta_{j} - \sum_{k=j+1}^t \delta_{k}^s, j = 1, \ldots, t\\
&\delta_{1}^s=0\\
&P_{jt}^s=1\rightarrow H_{t}^{s}=\texttt{piecewise}\{l_i \to X_i; 1\}(0,\hat{L}_{up}(0, d_{jt}))\tilde{I}_{t}^s,	&\begin{array}{l} i = 1, \ldots, W\\  j=1,\ldots, t\end{array} \label{jointmilp3}\\
&P_{jt}^s=1\rightarrow B_{t}^{s}=\texttt{piecewise}\{-1+l_i  \to X_i; 0\}(0,\hat{L}_{up}(0, d_{jt}))\tilde{I}_{t}^s	&\begin{array}{l} i = 1, \ldots, W\\  j=1,\ldots, t\end{array}  \label{jointmilp4}\\
&H_{t}^{s}\geq (I_t^s+\sum_{j=1}^td_{jt}P_{jt}^s)\sum_{k=1}^ip_k-\sum_{j=1}^t(\sum_{k=1}^ip_kE[d_{jt}|\Omega_i]-e_W)P_{jt}^s,		&i = 1, \cdots, W \label{jointmilp5}\\
&B_{t}^{s}\geq  -I_t^s+(I_t^s+\sum_{j=1}^td_{jt}P_{jt}^s)\sum_{j=1}^ip_k-\sum_{j=1}^t(\sum_{k=1}^ip_kE[d_{jt}|\Omega_i]-e_W)P_{jt}^s,  	&i = 1, \cdots, W \label{jointmilp6}\\
&P_{jt}^s \in \{0, 1\},  j = 1,  \ldots, t \\
&\delta_{t}^s \in \{0, 1\}\\
&I_0^s \leq \tilde{I}_1^S+\tilde{d}_1\\
&G_1^s(I_0^s)=C_1^S(\tilde{I}_1^S+\tilde{d}_1)
\end{align}

}

\newpage

\section{Test bed}\label{testbed}
\noindent
Periodic demands with different demand patterns under the eight period computational study are displayed in Table \ref{demandpatterns1}. The demand of each period under the twenty-five periods numerical example is shown in Table \ref{demandpatterns2}. The first column represents period indexes; the rest columns denote various demands.\\
\begin{table}[!htbp]
\small
\centering
\begin{tabular}{|l|c c c c c c c c c c|}
\hline
Period&LCY1  &LCY2  &SIN1  &SIN2  &STA  &RAND   &EMP1  &EMP2  &EMP3  &EMP4 \\
\hline
1 & 15  & 3      &15     &12      &10    &2         &5         &4        &11      &18\\
2 & 16  & 6      &4       &7        &10    &4        &15        &23      &14      &6\\
3 & 15  & 7      &4       &7        &10    &7        &26        &28      &7        &22\\
4 & 14  & 11    &10      &10      &10    &3        &44        &50      &11      &22\\
5 & 11  & 14    &18      &13      &10    &10      &24        &39      &16      &51\\
6 & 7   & 15     &4       &7        &10    &10      &15        &26      &31      &54\\
7 & 6   & 16     &4       &7        &10    &3        &22        &19      &11      &22\\
8 & 3   & 15     &10     &12       &10    &3        &10        &32      &48      &21\\
\hline
\end{tabular}
\caption{Demand data of the 8-period computational analysis}
\label{demandpatterns1}
\end{table}

\begin{table}[!htbp]
\small
\centering
\begin{tabular}{|l|c c c c c c c c c c|}
\hline
Period&LCY1  &LCY2  &SIN1  &SIN2  &STA  &RAND   &EMP1  &EMP2  &EMP3  &EMP4 \\
\hline
1&11&23&130&122&100&178&2&47&44&49\\
2&17&32&150&130&100&178&51&81&116&188\\
3&26&42&127&120&100&136&152&236&264&64\\
4&38&55&76&98&100&211&467&394&144&279\\
5&53&70&27&77&100&119&268&164&146&453\\
6&71&86&10&70&100&165&489&287&198&224\\
7&92&103&36&81&100&47&446&508&74&223\\
8&115&120&88&103&100&100&248&391&183&517\\
9&138&136&136&124&100&62&281&754&204&291\\
10&159&150&149&130&100&31&363&694&114&547\\
11&175&161&121&118&100&43&155&261&165&646\\
12&186&168&68&95&100&199&293&195&318&224\\
13&190&170&22&75&100&172&220&320&119&215\\
14&186&168&11&71&100&96&93&111&482&440\\
15&175&161&42&84&100&69&107&191&534&116\\
16&159&150&96&107&100&8&234&160&136&185\\
17&138&136&140&126&100&29&124&55&260&211\\
18&115&120&148&129&100&135&184&84&299&26\\
19&92&103&114&115&100&97&223&58&76&55\\
20&71&86&60&91&100&70&101&0&218&0\\
21&53&70&18&73&100&248&123&0&323&0\\
22&38&55&14&72&100&57&99&0&102&0\\
23&26&42&50&87&100&11&31&0&174&0\\
24&17&32&104&110&100&94&82&0&284&0\\
25&11&23&144&127&100&13&0&0&0&0\\
\hline
\end{tabular}
\caption{Demand data of the 25-period computational analysis}
\label{demandpatterns2}
\end{table}

\end{document}